\definecolor{light-gray1}{gray}{0.90}
\definecolor{light-gray2}{gray}{0.80}
\definecolor{light-gray3}{gray}{0.60}
\newcommand{\vp}{\varphi}
\newcommand{\R} {\mathbb R}
\newcommand{\cuad}{{\sqcap\kern-.68em\sqcup}}
\newcommand{\abs}[1]{\mid #1 \mid}
\newcommand{\ve}{\varepsilon}
\newcommand{\be}{\begin{equation}}
\newcommand{\ee}{\end{equation}}
\definecolor{darkgreen}{rgb}{0.2,0.7,0.1}
\newcommand{\sech}{\mathop{\mbox{\normalfont sech}}\nolimits}
\newcommand{\px}{\partial_x}
\newcommand{\nlop}{(1-\partial_x^2)^{-1}}
\newcommand{\al}{\alpha}
\def\bm{\left( \begin{array}{cc}}
\def\endm{\end{array}\right)}
\providecommand{\abs}[1]{\left|#1 \right|}
\providecommand{\norm}[1]{\left\| #1 \right\|}
\newcommand{\ba}{\begin{equation*}}
\newcommand{\ea}{\begin{equation*}}
\newcommand{\bea}{\begin{eqnarray}}
\newcommand{\eea}{\end{eqnarray}}
\newcommand{\bee}{\begin{eqnarray*}}
\newcommand{\eee}{\end{eqnarray*}}
\newcommand{\ben}{\begin{enumerate}}
\newcommand{\een}{\end{enumerate}}
\numberwithin{equation}{section}
\newtheorem{theorem}{Theorem}[section]
\newtheorem*{theorem*}{Theorem}
\newtheorem{lemma}{Lemma}[section]
\theoremstyle{remark}
\newtheorem{remark}{Remark}[section]
\title[Decay for gBBM]{Extended decay properties for generalized BBM equations}
\author{Chulkwang Kwak}
\address{Facultad de Matem\'aticas, Pontificia Universidad Cat\'olica de Chile, Campus San Joaqu\'in. Avda. Vicu\~na Mackenna 4860, Santiago, Chile}
\email{chkwak@mat.uc.cl}
\thanks{C. K. is supported by FONDECYT Postdoctorado 2017 Proyect N$^{\circ}$ 3170067.}
\author{Claudio Mu\~noz}
\address{CNRS and Departamento de Ingenier\'{\i}a Matem\'atica and Centro
de Modelamiento Matem\'atico (UMI 2807 CNRS), Universidad de Chile, Casilla
170 Correo 3, Santiago, Chile.}
\email{cmunoz@dim.uchile.cl}
\thanks{C. M. work was partly funded by Chilean research grants FONDECYT  1150202, Fondo Basal CMM-Chile, MathAmSud EEQUADD and Millennium
Nucleus Center for Analysis of PDE NC130017.}
\thanks{Part of this work was carried out while the authors were part of the Focus Program on Nonlinear Dispersive Partial Differential Equations and Inverse Scattering (August 2017) held at Fields Institute, Canada. They would like to thank the Institute and the organizers for their warming support.}
\subjclass{35Q35,35Q51}
\begin{document}

%%%%%%%%%%%%%%%%%%%%%%%%%%%%%%%%%%%%%%%%%%%%%%%%%%%%%%%%%%%%%%%%%%%%%%%%%%%%%%%%%%%%%%%%%%%%%%%%%%
\begin{abstract}
In this note we show that all small solutions of the BBM equation must decay to zero as $t\to +\infty$ in large portions of the physical space, extending previous known results, and only assuming data in the energy space. Our results also include decay on the left portion of the physical line, unlike the standard KdV dynamics.  
\end{abstract}

\maketitle

\section{Introduction and Main Results}

\subsection{Setting of the problem} In this note we shall consider nonlinear scattering and decay properties for the one-dimensional generalized Benjamin, Bona and Mahony (gBBM) equation \cite{BBM} (or regularized long wave equation) in the energy space:
\begin{equation}\label{BBM}
(1- \partial_x^2)u_t  + \left( u + u^p \right)_x =0, \quad (t,x)\in \R\times\R, \quad p=2,3,4,\ldots
\end{equation}
Here $u=u(t,x)$ is a real-valued scalar function. The original BBM equation, which is the case $p=2$ above, was originally derived by Benjamin, Bona and Mahony \cite{BBM} and Peregrine \cite{Peregrine} as a model for the uni-directional propagation of long-crested, surface water waves. It also arises mathematically as a regularized version of the KdV equation, obtained by performing the standard "Boussinesq trick''. This leads to simpler well-posedness and better dynamical properties compared with the original KdV equation. Moreover, BBM is not integrable, unlike KdV \cite{BPS,MMM}. 

\medskip

It is well-known (see \cite{BT}) that \eqref{BBM} for $p=2$ is globally well-posed in $H^s $, $s\geq 0$, and weakly ill-posed for $s<0$. As for the remaining cases $p=3,4,\ldots$, gBBM is globally well-posed in $H^1$ \cite{BBM}, thanks to the preservation of the mass and energy
\be\label{Mass}
M[u](t):=\frac12 \int \left(   u^2 + u_x^2 \right)(t,x)dx,
\ee
\be\label{Energy}
E[u](t):= \int \left( \frac12  u^2 +\frac{u^{p+1} }{p+1} \right)(t,x)dx.
\ee
Since now, we will identify $H^1$ as the standard \emph{energy space} for \eqref{BBM}.

\subsection{Main result} In this note we consider the problem of decay for small solutions to gBBM \eqref{BBM}. Let $b>0$ and $a>\frac18$ be any positive numbers,
%and
%\be\label{a}
%a:= \frac16(5-\sqrt{13}) \sim 0.232,
%\ee
and $I(t)$ be given by
\be\label{I(t)}
I(t):=\left( -\infty,  -a t \right) \cup \left( (1+b) t ,  \infty\right), \quad t>0.
\ee

\begin{theorem}\label{Thm1}
Let $u_0\in H^1$ be such that, for some $\ve=\ve(b)>0$ small, one has
\be\label{Smallness}
\|u_0\|_{H^1}< \ve.
\ee
Let $u\in C(\R, H^1)$ be the corresponding global (small) solution of \eqref{BBM} with initial data $u(t=0)=u_0$. Then, for $I(t)$ as in \eqref{I(t)}, there is strong decay to zero:
\be\label{Conclusion_0}
\lim_{t \to \infty}   \|u(t)\|_{H^1(I(t))} =0.
\ee
Additionally, one has the mild rate of decay
\be\label{Conclusion_1}
\int_{2}^\infty \!\! \int e^{-c_0 |x+ \sigma t|} \left(u^2 + u_x^2\right)(t,x)dx\, dt  \lesssim_{c_0} \ve^2,
\ee
where $\sigma $ is fixed and such that $\sigma>\frac18$ or $\sigma=-(1+b)$.
\end{theorem}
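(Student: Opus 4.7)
The plan is to derive moving-frame virial estimates adapted to each of the two half-rays making up $I(t)$, convert them into the integrated space-time bound \eqref{Conclusion_1}, and then upgrade to the uniform decay \eqref{Conclusion_0} by a soft continuity argument. First, I would rewrite \eqref{BBM} in nonlocal form
\[
u_t = -\partial_x K*(u+u^p), \qquad K(x)=\tfrac12 e^{-|x|},
\]
so that $K*=(1-\partial_x^2)^{-1}$. Given $\sigma$ as in the statement, I introduce a bounded, monotone weight $\psi=\psi_\sigma$ with $\psi'$ positive and exponentially localized, and set
\[
\mathcal{I}_\sigma(t):=\int \psi(x+\sigma t)\bigl(u^2+u_x^2\bigr)(t,x)\,dx.
\]
The case $\sigma=-(1+b)$ tracks the region $x>(1+b)t$ (right of the maximal forward linear group velocity $1$), and the cases $\sigma>\tfrac18$ track the region $x<-\sigma t\subset(-\infty,-at)$ (to the left of the most negative linear group velocity $-\tfrac18$).

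Next I differentiate in time, using the equation in the form $(u-u_{xx})_t=-(u+u^p)_x$ together with $u_t=-\partial_x K*(u+u^p)$. After several integrations by parts, one obtains a virial identity of the schematic form
\[
\frac{d}{dt}\mathcal{I}_\sigma(t)=\sigma\!\int\!\psi'(u^2+u_x^2)\,dx+\mathcal{Q}_{\mathrm{lin}}[\psi,u]+\mathcal{N}[\psi,u],
\]
where $\mathcal{Q}_{\mathrm{lin}}$ is quadratic in $u$ with coefficients involving $\psi',\psi'',\psi'''$ and the kernel $K$, and $\mathcal{N}$ collects the contributions of $u^p$. The central algebraic point is to show that, for $\sigma$ as specified, the full linear quadratic form $\sigma\int\psi'(u^2+u_x^2)+\mathcal{Q}_{\mathrm{lin}}$ is \emph{coercive}, i.e.\ dominates $\mu\!\int\psi'(u^2+u_x^2)$ for some $\mu=\mu(\sigma)>0$. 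The threshold $\tfrac18$ arises here as the extremum of the linear BBM group velocity $\omega'(\xi)=(1-\xi^2)/(1+\xi^2)^2$; once one Fourier-analyses the symbol of the weighted bilinear form, coercivity reduces to the fact that the shifted symbol is uniformly positive exactly when $\sigma>\tfrac18$ or $\sigma\le -1$. This spectral computation is what I expect to be the main obstacle, because it has to be done at the level of the weighted operator, not just the pure symbol, and therefore requires carefully controlling the commutator $[K*,\psi]$. The exponential decay of $K$ makes this commutator of lower order relative to $\int\psi'(u^2+u_x^2)$, but the estimates must be sharp enough to leave a positive residual on the right-hand side.

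Once coercivity of the linear piece is established, the nonlinear term $\mathcal{N}$ is controlled by Sobolev embedding $H^1\hookrightarrow L^\infty$ and the smallness \eqref{Smallness} (together with conservation of \eqref{Mass} and \eqref{Energy}, which guarantees the smallness persists in time): one gets $|\mathcal{N}[\psi,u]|\lesssim \varepsilon\int\psi'(u^2+u_x^2)$, so for $\varepsilon=\varepsilon(b)$ small enough this is absorbed. The result is a monotonicity-type inequality
\[
-\frac{d}{dt}\mathcal{I}_\sigma(t)\gtrsim \int\psi'(u^2+u_x^2)\,dx,
\]
with $\mathcal{I}_\sigma$ uniformly bounded by $\|u\|_{H^1}^2\lesssim\varepsilon^2$. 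Integrating from $t=2$ to $\infty$ with a weight $\psi'$ comparable to $e^{-c_0|x+\sigma t|}$ yields exactly \eqref{Conclusion_1}.

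Finally, \eqref{Conclusion_0} follows from \eqref{Conclusion_1} by a standard continuity argument. Fixing $\sigma=-(1+b)$ and any $\sigma'>\tfrac18$ with $\sigma'\le a$, the bound \eqref{Conclusion_1} implies that the non-negative function
\[
f(t):=\int_{I(t)} (u^2+u_x^2)(t,x)\,dx
\]
is integrable on $[2,\infty)$, since on $I(t)$ the exponential weight $e^{-c_0|x+\sigma t|}+e^{-c_0|x+\sigma' t|}$ is bounded below by a positive constant. The map $t\mapsto f(t)$ is continuous by $u\in C(\R,H^1)$ and the explicit form of $I(t)$. If $f(t_n)\ge\eta>0$ along some sequence $t_n\to\infty$, continuity and a uniform $H^1$ bound on $u_t$ (read off from the equation) would give $f\ge\eta/2$ on intervals of fixed positive length around each $t_n$, contradicting $f\in L^1([2,\infty))$. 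Hence $f(t)\to 0$, which is \eqref{Conclusion_0}.
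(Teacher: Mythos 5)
Your strategy for the space--time bound \eqref{Conclusion_1} is essentially the paper's: a virial functional with a moving, exponentially localized weight, coercivity of the resulting quadratic form for $\sigma>\frac18$ or $\sigma=-(1+b)$ (and you correctly identify $\frac18$ as the extremal negative group velocity of the linearized flow), and absorption of the nonlinear terms via smallness and Lemma-type commutator/comparison estimates for $(1-\partial_x^2)^{-1}$. But you leave the coercivity step --- which you yourself flag as the main obstacle --- as an assertion. The paper proves it concretely: on the left region it works not with the localized mass alone but with the combination $\mathcal H=\mathcal I+\alpha\mathcal J$ ($\alpha=1$) of localized mass and localized energy, substitutes $f=(1-\partial_x^2)^{-1}u$ and $g=|\varphi'|^{1/2}f$, and verifies positivity through the explicit sum of squares $\int(g_{xxx}-\sqrt2\,g_{xx}+3g_x-3\sqrt2\,g)^2\ge0$, the weight commutators being made $O(1/L)$ by taking $L$ large. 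A complete write-up must supply this (or an equivalent symbol computation with quantitative commutator bounds); note that the relevant sixth-order symbol degenerates exactly at the frequency $\xi^2=3$ where the group velocity attains $-\frac18$, so the positive margin is only of size $\tilde\sigma=8\sigma-1$ and the error terms have to be beaten by it.

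The genuine gap is in the passage from \eqref{Conclusion_1} to \eqref{Conclusion_0}. You claim that on $I(t)$ the weight $e^{-c_0|x+\sigma t|}+e^{-c_0|x+\sigma' t|}$ is bounded below by a positive constant; this is false. For $\sigma=-(1+b)$ the weight is concentrated near the moving line $x=(1+b)t$ and decays exponentially as $x\to+\infty$, so it is not bounded below on the ray $((1+b)t,\infty)$; the same happens on $(-\infty,-at)$ as $x\to-\infty$. Hence \eqref{Conclusion_1} does not imply that $t\mapsto\int_{I(t)}(u^2+u_x^2)\,dx$ is integrable, and the continuity/contradiction argument in your last paragraph collapses. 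The paper closes this step with an almost-monotonicity argument in the style of Martel--Merle: it replaces the localized weight by the monotone, non-vanishing weight $\varphi=\frac12(1\pm\tanh)$, which \emph{is} bounded below on the entire relevant half-line, shows that the functional $\mathcal I_{t_0}(t)$ built on the shifted line $x+\sigma t_0-\tilde\sigma(t_0-t)$ (with a slightly different speed $\tilde\sigma$, e.g.\ $\tilde b=b/2$) is nonincreasing on $[2,t_0]$ up to absorbable errors, and then compares the value at $t=t_0$ with the value at $t=2$, which tends to $0$ as $t_0\to\infty$ because the weight has drifted off to infinity against the fixed $H^1$ profile $u(2)$. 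You would need to replace your final step by an argument of this kind; the $L^1$-in-time information alone is localized too close to the lines $x=-\sigma t$ to control the full exterior region.
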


\begin{remark}
The case of decay inside the interval $((1+b) t,+\infty)$ is probably well-known in the literature, coming from arguments similar to those exposed by El-Dika and Martel in \cite{ElDika_Martel}. However, decay for the left portion $\left( -\infty,  - \frac18^+ t \right)$ seems completely new as far as we understand, and it is in strong contrast with the similar decay problem for the KdV equation on the left, which has not been rigorously proved yet. 
\end{remark}

%\begin{remark}
%We believe that the constant $a$ in \eqref{a} can be improved to $a=\frac18= 0.125$ at least, but we do not have a rigorous proof of this fact. Probably more decay on the initial data is needed to reach that endpoint.
%\end{remark}

\begin{remark}
Note that our results also consider the cases $p=2$ and $p=3$, which are difficult to attain using standard scattering techniques because of very weak linear decay estimates, and the presence of long range nonlinearities. Recall that the standard linear decay estimates for BMM are $O(t^{-1/3})$ \cite{Albert}. 
\end{remark}

\begin{remark}
Theorem \ref{Thm1} is in concordance with the existence of solitary waves for \eqref{BBM} \cite{ElDika_Martel}. Indeed, for any $c>1$,
\[
u(t,x):=(c-1)^{1/(p-1)}Q\left(\sqrt{\frac{c-1}{c}} (x-ct)\right), \quad Q(s):= \left( \frac{p+1}{2\cosh^2(\frac{p-1}{2}s)} \right)^{1/(p-1)},
\]
is a solitary wave solution of \eqref{BBM}, moving to the right with speed $c>1$. Small solitary waves in the energy space have $c\sim 1$ ($p<5$), which explains the emergence of the coefficient $b$ in \eqref{I(t)}. Also, \eqref{BBM} has solitary waves with negative speed: for $c>0$ and $p$ even,
\[
u(t,x):=-(c+1)^{1/(p-1)}Q\left(\sqrt{\frac{c+1}{c}} (x+ ct)\right),
\]
is solitary wave for \eqref{BBM}, but it is never small in the energy space. The stability problem for these solitary waves it is well-known: it was studied in \cite{Bona,Weinstein,SS,BMR}. Indeed, solitary waves are stable for $p=2,3,4,5$, and stable/unstable for $p>5$, depending on the speed $c$. See also \cite{MMM} for the study of the collision problem for $p=2$.
\end{remark}

\begin{remark}
The extension of this result to the case of perturbations of solitary waves is an interesting open problem, which will be treated elsewhere.
\end{remark}

\subsection{About the literature} Albert \cite{Albert} showed scattering in the $L^\infty$ norm for solutions of \eqref{BBM} provided $p>4$, with resulting global decay $O(t^{-1/3})$. Here the power 4 is important to close the nonlinear estimates, based in weighted Sobolev and Lebesgue spaces. Biler et. al \cite{BDH} showed decay in several space dimensions, using similar techniques. Hayashi and Naumkin \cite{HN} considered BBM with a difussion term, proving asymptotics for small solutions. Our result improves \cite{Albert,HN} in the sense that it also considers the cases $p=2$ and 3, which are not part of the standard scattering theory, and it does not requires a damping term to be valid. 

\medskip

Concerning asymptotic regimes around solitary waves, the fundamental work of Miller and Weinstein \cite{MW} showed asymptotic stability of the BBM solitary wave in exponentially weighted Sobolev spaces. El-Dika \cite{ElDika,ElDika2} proved asymptotic stability properties of the BBM solitary wave in the energy space. El-Dika and Martel \cite{ElDika_Martel} showed stability an asymptotic stability fo the sum of $N$ solitary waves. See also Mizumachi \cite{Mizu} for similar results. All these results are proved on the right of the main part of the solution itself, and no information is given on the remaining left part. Theorem \ref{Thm1} is new in the sense that it also gives information on the left portion of the space.

\subsection{About the proof} In order to prove Theorem \ref{Thm1}, we follow the ideas of the proof described in \cite{KMPP}, where decay for an $abcd$-Boussinesq system \cite{Bous,BCS1,BCS2} was considered. The main tool in \cite{KMPP} was the construction of a suitable virial functional for which the dynamics is converging to zero when integrated in time. In this paper, this construction is somehow simpler but still interesting enough, because it allows to consider two different regions of the physical space, on the left (dispersive) and on the right (soliton region), unlike KdV for which virial estimates only reach the soliton region \cite{MM,MM1,MM2}.  The virial that we use here is also partly inspired in the ones introduced in \cite{KMM1,KMM2,KMM3}, and previously in \cite{MM,MR}. See also \cite{AM1,MPP,GPR} for similar results. 

\subsection{Acknowledgments} We thank F. Rousset and M. A. Alejo for many interesting discussions on this subject and the BBM equation.

\bigskip
\section{Proof of Theorem \ref{Thm1}}\label{VIRIAL}

Let $L>0$ be large, and  $\vp=\vp(x)$ be a smooth, bounded weight function, to be chosen later. For each $t,\sigma\in\R$, we consider the following functionals (see \cite{ElDika_Martel} for similar choices):
\be\label{I}
\mathcal I(t) := \frac12\int \vp\left(\frac{x+\sigma t}{L}\right)\left(u^2 + u_x^2\right)(t,x)dx,
\ee
and
\be\label{J}
\mathcal J(t) := \int \vp\left(\frac{x+\sigma t}{L}\right)\left(\frac12 u^2 + \frac{u^{p+1}}{p+1} \right)(t,x)dx.
\ee
Clearly each functional above is well-defined for $ H^1$ functions. Using \eqref{BBM} and integration by parts, we have the following standard result (see also \cite{ElDika_Martel,KMPP} for similar computations).

\begin{lemma}\label{Virial_bous}
For any $t\in \R$, we have %($\varphi' := \varphi_x$ for simplicity)
\be\label{Virial0}
\begin{aligned}
\frac{d}{dt} \mathcal I(t) = &~  {}  \frac{\sigma}{2L} \int  \varphi' u_x^2  +\frac{1}{2L}  (\sigma-1) \int  \varphi' u^2 + \frac{1}{L}\int \varphi' u (1-\partial_x^2)^{-1} u    \\
&~ {}     -\frac1{L(p+1)} \int  \varphi' u^{p+1}     + \frac{1}{L}\int \varphi'  u (1-\partial_x^2)^{-1}\left(u^p\right) ,
\end{aligned}
\ee
and if $v:=(1-\partial_x^{2})^{-1}(u + u^p)$,
\be\label{Virial1}
\begin{aligned}
\frac{d}{dt} \mathcal J(t) = &~  {}   \frac{\sigma}{2L} \int \varphi' \left( u^2 + \frac2{p+1} u^{p+1} \right) +\frac{1}{2L} \int \varphi' (v^2 - v^2_x). %\sigma \int  \varphi' \left( \frac12u^2 + \frac13u^3\right)  + \frac{1}{2}   \int  (u +u^2)^2(1-\px^2)^{-1}\varphi'  .
\end{aligned}
\ee
\end{lemma}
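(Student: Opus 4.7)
The plan is to differentiate both functionals under the integral sign, use the BBM equation in the form $(1-\partial_x^2)u_t = -(u+u^p)_x$, and organize the resulting expressions via integration by parts. Since $\varphi$ depends on $t$ only through the argument $(x+\sigma t)/L$, the only contribution of the time-derivative landing on the weight is the factor $\tfrac{\sigma}{L}\varphi'$; every other $\partial_t$ will act on $u$ or $u_x$ and must be eliminated using the equation. The technical device that makes everything transparent is to introduce the auxiliary function
\[
v := (1-\partial_x^2)^{-1}(u+u^p),
\]
so that $(1-\partial_x^2)v = u + u^p$, i.e.\ $v_{xx} = v - u - u^p$, and $u_t = -v_x$.

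For $\mathcal I(t)$ I would first write
\[
\frac{d}{dt}\mathcal I(t) = \frac{\sigma}{2L}\int \varphi'(u^2+u_x^2)\,dx + \int \varphi\,u\,u_t\,dx + \int \varphi\,u_x u_{xt}\,dx,
\]
then substitute $u_t = -v_x$ in the first of the remaining integrals (integrating by parts to move the derivative off $v$) and, in the second, replace $u_{xt} = -v_{xx} = -(v - u - u^p)$ using the identity above. The $\int \varphi u_x v\,dx$ pieces produced by these two manipulations cancel exactly, and what is left are only local expressions in $u$, $u_x$ plus the single nonlocal term $\int \varphi' u\,v\,dx$. Splitting $v = (1-\partial_x^2)^{-1}u + (1-\partial_x^2)^{-1}(u^p)$ and collecting the local terms gives precisely \eqref{Virial0}.

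For $\mathcal J(t)$ the computation is shorter: the time derivative of the weight again contributes the $\sigma/L$ piece, and the remaining term is
\[
\int \varphi\,(u+u^p)\,u_t\,dx = -\int \varphi\,(u+u^p)\,v_x\,dx = -\int \varphi\,(v - v_{xx})\,v_x\,dx,
\]
using $u+u^p = v - v_{xx}$. Writing $2v v_x = (v^2)_x$ and $2 v_{xx} v_x = (v_x^2)_x$ and integrating by parts once, one obtains the $\tfrac{1}{2L}\int \varphi'(v^2-v_x^2)$ contribution, which is the content of \eqref{Virial1}.

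The main obstacles are essentially bookkeeping: keeping track of signs and of the factor $1/L$ produced whenever $\partial_x$ falls on $\varphi(\tfrac{x+\sigma t}{L})$, and recognizing that the two $\int \varphi u_x v\,dx$ terms in the $\mathcal I$-calculation cancel so that no nonlocal dependence on $v_x$ survives. All integrations by parts are justified because $u(t)\in H^1(\mathbb{R})$ and $\varphi$ is smooth and bounded, so the boundary contributions at $\pm\infty$ vanish. No smallness hypothesis on the solution is required for this lemma: it is a pointwise-in-time identity, valid for any $H^1$ solution of \eqref{BBM}.
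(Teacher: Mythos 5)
Your proposal is correct and follows essentially the same route as the paper: differentiate under the integral, use the equation in the form $u_t=-v_x$ with $v=(1-\partial_x^2)^{-1}(u+u^p)$, and integrate by parts. The only cosmetic difference is that for $\mathcal I(t)$ you substitute $u_{xt}=-(v-u-u^p)$ and exploit the cancellation of the two $\int\varphi\,u_x v$ terms, whereas the paper first groups $\int\varphi\,u(u_t-u_{txx})$ and then uses $\partial_x^2(1-\partial_x^2)^{-1}=-1+(1-\partial_x^2)^{-1}$; both bookkeepings yield \eqref{Virial0} and \eqref{Virial1} identically.
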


\begin{proof}
{\it Proof of \eqref{Virial0}.} We compute:
\[
\begin{aligned}
\frac{d}{dt} \mathcal I(t) =&~ \frac{\sigma}{2L} \int \varphi' ( u^2 + u_x^2 ) + \int \varphi (uu_t + u_x u_{tx})\\
=&~  \frac{\sigma}{2L} \int \varphi' ( u^2 + u_x^2 ) + \int \varphi u(u_t - u_{txx}) - \frac{1}{L}\int \varphi' u u_{tx}.
\end{aligned}
\]
Replacing \eqref{BBM}, and integrating by parts, we get
\[
\begin{aligned}
\frac{d}{dt} \mathcal I(t) =&~  \frac{\sigma}{2L} \int \varphi' ( u^2 + u_x^2 ) - \int \varphi u (u+u^p)_x +  \frac{1}{L}\int (\varphi' u)_{xx} (1-\px^2)^{-1} (u+u^p) \\
= :&~ I_1 + I_2 +I_3 .
\end{aligned}
\]
$I_1$ is already done.  On the other hand, 
\[
I_2=  - \int \varphi  (u u_x+ p u^p u_x)  =  \frac{1}{L}\int \varphi' \left( \frac12u^2 + \frac{p}{p+1} u^{p+1}\right). 
\]
Finally,
\[
\begin{aligned}
I_3 = &~ {}- \frac{1}{L}\int \varphi' u (u+u^p) +  \frac{1}{L}\int \varphi' u (1-\px^2)^{-1} (u+u^p) \\
= & {} - \frac{1}{L}\int \varphi' (u^2 + u^{p+1}) +  \frac{1}{L}\int \varphi' u (1-\px^2)^{-1} (u+u^p).
\end{aligned}
\]
We conclude that
\[
\begin{aligned}
\frac{d}{dt}\mathcal I(t) = &~  \frac{\sigma}{2L} \int  \varphi' u_x^2  +\frac{1}{2L}  (\sigma-1) \int  \varphi' u^2 + \frac{1}{L}\int \varphi' u (1-\partial_x^2)^{-1} u    \\
&~ {}     -\frac1{L(p+1)} \int  \varphi' u^{p+1}    + \frac{1}{L}\int \varphi'  u (1-\partial_x^2)^{-1}\left(u^p\right).
\end{aligned}
\]
This last equality proves \eqref{Virial0}.

\medskip
\noindent
{\it Proof of \eqref{Virial1}.} We compute:
\[
\begin{aligned}
\frac{d}{dt} \mathcal J(t) =&~ \frac{\sigma}{2L} \int \varphi' \left( u^2 + \frac2{p+1} u^{p+1} \right) + \int \varphi (u + u^p)u_t \\
=&~ \frac{\sigma}{2L} \int \varphi' \left( u^2 + \frac2{p+1} u^{p+1} \right)  - \int \varphi  (u + u^p) \px (1-\partial_x^{2})^{-1}(u + u^p).
%=&~ \frac{\sigma}{2} \int \varphi' ( u^2 + \frac23 u^3 )  - \int \varphi  (1-\partial_x^{2}) (1-\partial_x^{2})^{-1} (u + u^2) \px (1-\partial_x^{2})^{-1}(u + u^2).
\end{aligned}
\]
Recall that $v=(1-\partial_x^{2})^{-1}(u + u^p)$. Then
\[
 - \int \varphi  (u + u^p) \px (1-\partial_x^{2})^{-1}(u + u^p) =  - \int \varphi   (1-\partial_x^{2}) v v_x %=  - \int \varphi  (v-v_{xx}) v_x 
 = \frac{1}{2L}\int \varphi' (v^2 - v_x^2).
\]
Therefore,
\[
\frac{d}{dt} \mathcal J(t) = \frac{\sigma}{2L} \int \varphi' \left( u^2 + \frac2{p+1} u^{p+1} \right) + \frac{1}{2L}\int \varphi' (v^2 - v_x^2).
\]
This proves \eqref{Virial1}.
\end{proof}

For $\alpha$ real number, define the modified virial
\be\label{H}
\mathcal H(t):= \mathcal H_{\al}(t):= \mathcal I(t) + \alpha \mathcal J(t).
\ee
From Lemma \ref{Virial_bous}, we get (recall that $v = (1-\partial_x^{2})^{-1}(u + u^p)$)
\be\label{dH}
\begin{aligned}
\frac{d}{dt}\mathcal H(t) =& ~{}  \frac{\sigma}{2L} \int  \varphi' u_x^2  +\frac{1}{2L}  (\sigma(1+\al)-1) \int  \varphi' u^2 +  \frac{1}{L}\int \varphi' u (1-\partial_x^2)^{-1} u  \\
&~ {}   + \frac{\al}{2L}\int \varphi' (v^2 - v_x^2)    -\frac1{L(p+1)}(\al \sigma -1) \int  \varphi' u^{p+1}     + \frac{1}{L}\int \varphi'  u (1-\partial_x^2)^{-1}\left(u^p\right)  .
\end{aligned}
\ee
Let also, for $u\in H^1$,
\begin{equation}\label{eq:fg}
f := \nlop u \in H^3.% \quad \mbox{and} \quad g := \nlop \eta.
\end{equation}
We have
\[%begin{equation}\label{eq:L2}
\int \vp' u^2 = \int\vp'\left(f^2 + 2f_x^2 + f_{xx}^2\right) - \frac{1}{L^2}\int \vp'''f^2,
\]%end{equation}
\[%begin{equation}\label{eq:H1}
\int \vp'u_x^2 = \int\vp'\left(f_x^2 + 2f_{xx}^2 + f_{xxx}^2\right) - \frac{1}{L^2}\int \vp'''f_x^2,
\]%end{equation}
and
\[%begin{equation}\label{eq:nonlocal}
\int \vp' u \nlop u = \int\vp'\left(f^2 + f_x^2\right) - \frac{1}{2L^2}\int \vp'''f^2.
\]%end{equation}
Additionally, we easily have
\[%be\label{v2}
v^2 =f^2 + 2f (1-\partial_x^{2})^{-1}(u^p) + ( (1-\partial_x^{2})^{-1}u^p)^2 ,
\]%ee
and similarly,
\be\label{v2x}
v_x =f_x^2 + 2f_x (1-\partial_x^{2})^{-1}(u^p)_x + (\partial_x (1-\partial_x^{2})^{-1}u^p)^2.
\ee
Replacing these values in \eqref{dH} and rearranging terms, we get
\be\label{dH_new}
\frac{d}{dt}\mathcal H(t) =\mathcal Q(t) + \mathcal{S}(t) + \mathcal N(t),
\ee
where
\be\label{Q}
\begin{aligned}
\mathcal Q(t) :=& ~{}  \frac{1}{2L}(1+\sigma)(1+\alpha) \int  \varphi' f^2  +\frac{1}{2L}  (\sigma(3+2\al)-\al) \int  \varphi' f_x^2  \\
& ~{} + \frac{1}{2L}((3+\alpha)\sigma-1) \int \varphi' f_{xx}^2  + \frac{\sigma}{2L}\int \varphi' f_{xxx}^2 , 
\end{aligned}
\ee
\be\label{S}
\begin{aligned}
\mathcal S(t) :=& ~{}  -\frac{1}{2L^3}  (\sigma(1+\al)-1) \int \vp'''f^2 -  \frac{\sigma}{2L^3} \int \vp'''f_x^2- \frac{1}{2L^3}\int \vp'''f^2 , 
\end{aligned}
\ee
and
\be\label{N}
\begin{aligned}
\mathcal N(t) :=& ~{}\frac{\al}{2L}\int \varphi' \left( 2f (1-\partial_x^{2})^{-1}(u^p) + ( (1-\partial_x^{2})^{-1}u^p)^2  \right)  \\
& ~{} - \frac{\al}{2L}\int \varphi' \left( 2f_x (1-\partial_x^{2})^{-1}(u^p)_x + (\partial_x (1-\partial_x^{2})^{-1}u^p)^2 \right)\\
&~ {}     -\frac1{L(p+1)}(\al \sigma -1) \int  \varphi' u^{p+1}     + \frac{1}{L}\int \varphi'  u (1-\partial_x^2)^{-1}\left(u^p\right).
\end{aligned}
\ee
Now we consider two different cases.

\medskip
\noindent
{\it Case $x>0$.} This is the simpler case. We choose $\varphi := \tanh$, $\al=0$, and $\sigma =-(1+b)<0$, for $b$ any fixed positive number. Note that $\varphi' = \sech^2 >0.$ Then   
\begin{equation}\label{Positivity +}
\begin{aligned}
\mathcal Q(t) =& ~{}  -\frac{1}{2L}b \int  \varphi' f^2  - \frac{3}{2L} (1+ b) \int  \varphi' f_x^2  - \frac{1}{2L}(4+3b) \int \varphi' f_{xx}^2  - \frac{1}{2L}(1+b)\int \varphi' f_{xxx}^2.
\end{aligned}
\end{equation}
Now we recall the following result.

\begin{lemma}[Equivalence of local $H^1$ norms, \cite{KMPP}]\label{lem:L2 comparable}
Let $f$ be as in \eqref{eq:fg}. Let $\phi$ be a smooth, bounded positive weight satisfying $|\phi''| \le \lambda \phi$ for some small but fixed $0 < \lambda \ll1$. Then, for any $a_1,a_2,a_3,a_4 > 0$, there  exist $c_1, C_1 >0$, depending on $a_j$ and $\lambda >0$, such that
\begin{equation}\label{eq:L2_est}
c_1  \int \phi \, (u^2 + u_x^2) \le \int \phi\left(a_1f^2+a_2f_x^2+a_3f_{xx}^2 +a_4 f_{xxx}^2 \right) \le C_1 \int \phi \, (u^2 + u_x^2).
\end{equation}
\end{lemma}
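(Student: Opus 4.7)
The plan is to reduce both sides of \eqref{eq:L2_est} to the common balanced quantity $\int \phi(f^2+f_x^2+f_{xx}^2+f_{xxx}^2)$, and then to invoke positivity of the coefficients $a_j$. Once one proves the equivalence
\[
\int \phi(u^2+u_x^2)\,dx \; \sim \; \int \phi\bigl(f^2+f_x^2+f_{xx}^2+f_{xxx}^2\bigr)\,dx,
\]
the double inequality \eqref{eq:L2_est} is immediate, since every $a_j>0$ means the positive linear combination $a_1 f^2+a_2 f_x^2+a_3 f_{xx}^2+a_4 f_{xxx}^2$ is sandwiched pointwise between $\min_j a_j$ and $\max_j a_j$ times $f^2+f_x^2+f_{xx}^2+f_{xxx}^2$. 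So the real content of the lemma is the displayed equivalence.

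To prove that equivalence I would exploit the identity $u = f - f_{xx}$ (and hence $u_x = f_x - f_{xxx}$) coming from $(1-\partial_x^2)f = u$, as in \eqref{eq:fg}. Expanding pointwise,
\[
u^2+u_x^2 \; = \; f^2+f_x^2+f_{xx}^2+f_{xxx}^2 \;-\; 2ff_{xx} \;-\; 2f_x f_{xxx}.
\]
The two cross terms are handled by integration by parts (twice on each) against $\phi$. A direct calculation gives
\[
-2\int \phi f f_{xx}\,dx \; = \; 2\int \phi f_x^2\,dx \;-\; \int \phi'' f^2\,dx,
\]
and analogously for $-2\int \phi f_x f_{xxx}\,dx$, which together lead to the clean identity
\[
\int \phi(u^2+u_x^2)\,dx \; = \; \int \phi\bigl(f^2+3f_x^2+3f_{xx}^2+f_{xxx}^2\bigr)\,dx \;-\; \int \phi''(f^2+f_x^2)\,dx.
\]

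At this point the hypothesis $|\phi''|\le \lambda \phi$ with $\lambda \ll 1$ is exactly what is needed to absorb the residual integral, since
\[
\Bigl|\int \phi''(f^2+f_x^2)\,dx\Bigr| \;\le\; \lambda \int \phi(f^2+f_x^2)\,dx.
\]
For $\lambda$ small the coefficients of $\int \phi f^2$ and $\int \phi f_x^2$ on the right remain strictly positive and bounded, so both the lower and upper bounds on $\int \phi(u^2+u_x^2)$ in terms of $\int \phi(f^2+f_x^2+f_{xx}^2+f_{xxx}^2)$ follow, with constants depending only on $\lambda$. Composing this with the trivial sandwich by $\min_j a_j$ and $\max_j a_j$ yields \eqref{eq:L2_est}, with $c_1$ and $C_1$ depending on the $a_j$ and on $\lambda$. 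I expect the only real obstacle to be bookkeeping: one must verify that every boundary term at infinity in the integrations by parts actually vanishes. This is granted by $u\in H^1$, so that $f\in H^3$ by \eqref{eq:fg}, together with the boundedness of $\phi$ and its derivatives (the latter being implicit in the condition $|\phi''|\le \lambda \phi$ with $\phi$ bounded). Beyond that check, the proof is essentially algebraic.
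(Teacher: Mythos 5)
Your argument is correct and complete: the identity $\int \phi(u^2+u_x^2)=\int\phi(f^2+3f_x^2+3f_{xx}^2+f_{xxx}^2)-\int\phi''(f^2+f_x^2)$ together with $|\phi''|\le\lambda\phi$ gives the two-sided equivalence with explicit constants (e.g.\ $c_1=\min_j a_j/(3+\lambda)$, $C_1=\max_j a_j/(1-\lambda)$), and the boundary terms vanish since $f\in H^3$ and $\phi'$ is bounded (which indeed follows from the boundedness of $\phi$ and $\phi''$ via Landau's inequality, as you implicitly note). The paper itself gives no proof of this lemma, importing it from \cite{KMPP}; your computation is the standard one used there, so there is nothing to flag.
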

Thanks to this lemma, we get for this case
\be\label{Caso1}
\mathcal Q(t)  \lesssim_{b,L} - \int  \varphi' (f^2 +f_x^2 +f_{xx}^2 + f_{xxx}^2) \sim -\int\varphi' (u_x^2 + u^2). 
\ee

\medskip
\noindent
{\it Case $x<0$.} Here we need different estimates. In \eqref{Q}, we will impose
\[
\sigma= \frac18(1+ \tilde\sigma),\quad \tilde\sigma >0, \quad \hbox{and}\quad \al=1.
\]
%\[
%(1+\sigma)(1+\alpha) >0, \quad \sigma(3+2\al)>\al, \quad (3+\alpha)\sigma>1,\quad \hbox{and} \quad \sigma >0.
%\]
We choose now $\varphi := -\tanh$. Note that $\varphi' = -\sech^2 <0$. Then we have
\[
\begin{aligned}
-16L\mathcal Q(t) =& ~{}  2(9+\tilde \sigma) \int  |\varphi'| f^2  +  (-3+ 5\tilde\sigma) \int  |\varphi'| f_x^2  \\
& ~{} +4 (-1 + \tilde\sigma) \int |\varphi'| f_{xx}^2  + (1+\tilde \sigma)\int |\varphi'| f_{xxx}^2 .
\end{aligned}
\]
Define $g:= |\varphi'|^{1/2} f = \sech (\frac{x+\sigma t}{L})f$. Then we have the following easy identities
\[
\begin{aligned}
g_x =&~\sech \left(\frac{x+\sigma t}{L}\right) f_x + \frac1L(\sech)'  \left(\frac{x+\sigma t}{L}\right) f \\
=&~ \sech \left(\frac{x+\sigma t}{L}\right) f_x - \frac1L\tanh \left(\frac{x+\sigma t}{L}\right) g,
\end{aligned}
\]
\[
g_{xx} = \sech \left(\frac{x+\sigma t}{L}\right) f_{xx} + \frac2L(\sech)'  \left(\frac{x+\sigma t}{L}\right) f_x + \frac1{L^2}(\sech)''  \left(\frac{x+\sigma t}{L}\right) f, 
\]
and
\[
\begin{aligned}
g_{xxx} = &~ \sech \left(\frac{x+\sigma t}{L}\right) f_{xxx} +\frac3L (\sech)'  \left(\frac{x+\sigma t}{L}\right) f_{xx} \\
&~{} + \frac3{L^2}(\sech)''  \left(\frac{x+\sigma t}{L}\right) f_x + \frac1{L^3}(\sech)'''  \left(\frac{x+\sigma t}{L}\right) f.
\end{aligned}
\]
Therefore,
{\color{black} 
\[
g_{xx} = -\frac{1}{L^2}g - \frac{2}{L} \tanh \left(\frac{x+\sigma t}{L}\right) g_x + \sech \left(\frac{x+\sigma t}{L}\right) f_{xx}
\]
and
\[
g_{xxx} = -\frac{1}{L^3} \tanh \left(\frac{x+\sigma t}{L}\right) g -\frac{3}{L^2} g_x - \frac{3}{L} \tanh \left(\frac{x+\sigma t}{L}\right) g_{xx} + \sech \left(\frac{x+\sigma t}{L}\right) f_{xxx}.
\]
}
Consequently, for $L$ large enough,
\[
\begin{aligned}
-16L\mathcal Q(t) =& ~{}  2(9+\tilde \sigma) \int g^2  +  (-3+ 5\tilde\sigma) \int  g_x^2  \\
& ~{} +4 (-1 + \tilde\sigma) \int g_{xx}^2  + (1+\tilde \sigma)\int g_{xxx}^2 +O\left(\frac1L \int (g^2 +g_x^2 + g_{xx}^2)\right).
\end{aligned}
\]
Now we have for $g\in H^3$, 
\[
\int ( g_{xxx} -\sqrt{2}g_{xx} +3 g_x -3\sqrt{2}g)^2\geq 0.
\]
Expanding terms and integrating by parts,
\[
 \int g_{xxx}^2      -4 \int g_{xx}^2  -3 \int  g_x^2+  18 \int g^2 \geq 0.
\]
We conclude that for $\tilde \sigma>0$ fixed and $L$ large enough,
\[
\begin{aligned}
-16L\mathcal Q(t) \geq   & ~{}  \tilde \sigma  \int g^2  +  4\tilde\sigma \int  g_x^2   +3 \tilde\sigma \int g_{xx}^2  + \frac12\tilde \sigma \int g_{xxx}^2.
\end{aligned}
\]
Coming back to the variable $f$, we obtain for $L$ even larger if necessary,
\[
\begin{aligned}
-16L\mathcal Q(t)  \geq & ~{}  \frac12\tilde \sigma \int  |\varphi'| f^2  + 3\tilde\sigma \int  |\varphi'| f_x^2   +2 \tilde\sigma  \int |\varphi'| f_{xx}^2  + \frac14\tilde \sigma\int |\varphi'| f_{xxx}^2 , 
\end{aligned}
\]
%\[
%\sigma > \frac{\al}{3+2\al}, \quad \sigma>\frac{1}{3+\alpha}.
%\]
%Both conditions are equally satisfied if $ \frac{\al}{3+2\al} = \frac{1}{3+\alpha}$, which happens at $\al =\frac12 (-1 + \sqrt{13})\sim 1.303$. This is the choice of $\al$ that we will assume from now on.  Replacing, we have
%\[
%\sigma > \frac{1}{6}(5-\sqrt{13}) =a.
%\]
Then we have
\be\label{Caso2}
\mathcal Q(t)   \lesssim_{\tilde\sigma,L} - \int  |\varphi' |(f^2 +f_x^2 +f_{xx}^2+f_{xxx}^2) \sim -\int |\varphi' | (u_x^2 + u^2). 
\ee
From \eqref{Caso1} and \eqref{Caso2} we conclude that 
\be\label{Caso3}
\mathcal Q(t)   \lesssim  -\int |\varphi' |(u_x^2 + u^2),
\ee
provided $\sigma = -(1+b)$, $b>0$, or $\sigma>\frac18$.  The terms in \eqref{S} can be absorbed by this last term using $L>0$ large and the fact that $|\varphi'''| \lesssim |\varphi'|$. Finally, \eqref{N} can be absorbed by \eqref{Caso3} using \eqref{Smallness} (provided $\ve$ is small enough compared with $b$), just as in \cite{ElDika2,KMPP}. See Appendix \ref{A} for more details. We get
\be\label{Conclu}
\frac{d}{dt}\mathcal H(t)  \lesssim -\int |\varphi' | (u_x^2 + u^2).
\ee
Therefore, we conclude that 
\begin{equation}\label{eq:virial1}
\int_2^{\infty} \int \sech^2 \left(\frac{x +\sigma t}{L}\right) \left(u^2 + u_x^2 \right)(t,x) \, dx\,dt \lesssim_{L}\ve^2.
\end{equation}
This proves \eqref{Conclusion_1}. As an immediate consequence, there exists an increasing sequence of time $t_n \to \infty$ as $n \to \infty$ such that
\begin{equation}\label{eq:virial2}
\int \sech^2 \left(\frac{x+\sigma t_n}{L}\right) \left(u^2 + u_x^2  \right)(t_n,x) \; dx \longrightarrow 0 \mbox{ as } n \to \infty.
\end{equation}
\bigskip

\subsection{End of proof of the Theorem \ref{Thm1}}\label{7}

Consider $\mathcal I(t)$ in \eqref{I}. Choose now $\varphi := \frac12(1+\tanh)$ (for the right side) and $\varphi := \frac12(1-\tanh)$ (for the left hand side) in \eqref{I}. The conclusion \eqref{Conclusion_0} follows directly from the ideas in \cite{MM2}. Indeed,
% \eqref{Virial0}, we have
%\[
%\left|\frac{d}{dt} \mathcal I(t)\right| \lesssim \int \sech^2\left(\frac{x +\sigma t}{L}\right)(u^2 + u_x^2). 
%\]
%Integrating inside the interval $[t, t_n]$, and sending $n$ to infinity (using \eqref{eq:virial2}), we get
%\[
%\mathcal I(t) \lesssim \int_t^\infty \int \sech^2\left(\frac{x +\sigma t}{L}\right)(u^2 + u_x^2)dxdt.
%\]
%Sending $t\to +\infty$, and using \eqref{eq:virial1}, we conclude (passing to well-known argument in \cite{MM2}) \eqref{Conclusion_0}.
%
%\begin{remark}
for the right side (i.e. $((1+b)t, \infty)$, $b > 0$ fixed), we choose $\tilde b = \frac{b}{2}$ and fix $t_0 > 2$. For $2 < t \le t_0$ and large $L \gg 1$ (to make all estimates above hold), we consider the  functional $\mathcal I_{t_0}(t)$ by
\[
\mathcal I_{t_0}(t) := \frac12 \int \varphi \left(\frac{x + \sigma t_0 - \tilde \sigma (t_0 -t)}{L} \right) \left( u^2 + u_x^2\right)(t,x)dx,
\]
where $\sigma = -(1+b)$ and $\tilde \sigma = -(1 + \tilde b)$. From Lemma \ref{Virial_bous}, \eqref{Positivity +} with $\tilde b > 0$ and the smallness condition \eqref{Smallness}, we have
\[\begin{aligned}
\frac{d}{dt} \mathcal I_{t_0}(t) \lesssim_{\tilde b, L} &~  {}  - \int \sech^2 \left(\frac{x + \sigma t_0 - \tilde \sigma (t_0 -t)}{L}\right) (u^2 + u_x^2) \le 0,
\end{aligned}\]
which shows that the new functional $\mathcal I_{t_0} (t)$ is decreasing on $[2,t_0]$. On the other hand, since $\lim_{x \to -\infty} \varphi(x) =0$, we have
\[
\limsup_{t \to \infty}\int \varphi \left(\frac{x -\beta t - \gamma }{L} \right) (u^2 + u_x^2)(\delta, x)dx =0,
\]
for any fixed $\beta, \gamma, \delta >0$. Together with all above, for any $2 < t_0$, we have
\[\begin{aligned}
0 \le&~{} \int \varphi \left(\frac{x - (1+b) t_0}{L} \right) \left( u^2 + u_x^2\right)(t_0,x)dx\le  \int \varphi \left(\frac{x -(b-\tilde b) t_0 - 2(1+ \tilde b)}{L} \right) \left( u^2 + u_x^2\right)(2,x)dx,
\end{aligned}\]
which implies
\[\limsup_{t \to \infty} \int \varphi \left(\frac{x - (1+b) t}{L} \right) \left( u^2 + u_x^2\right)(t,x)dx = 0.\]
A analogous argument can be applied to the left side ($(-\infty, -at)$, fixed $a > \frac18$), thus we conclude \eqref{Conclusion_0}.
%\end{remark}

\begin{remark}
The understanding of the decay procedure inside the interval $(-t/8,t)$ is an interesting open problem that we hope to consider in a forthcoming publication (see \cite{KM2}), at least in the case $p=2$. See also \cite{MP} for similar recent results in the KdV case.
\end{remark}

\appendix

\section{About the proof of \eqref{Conclu}}\label{A}

In this section we estimate the nonlinear term
\[
\begin{aligned}
\mathcal N(t) =& ~{}\frac{\al}2\int \varphi' \left( 2f (1-\partial_x^{2})^{-1}(u^p) + ( (1-\partial_x^{2})^{-1}u^p)^2  \right)  \\
& ~{} - \frac{\al}2\int \varphi' \left( 2f_x (1-\partial_x^{2})^{-1}(u^p)_x + (\partial_x (1-\partial_x^{2})^{-1}u^p)^2 \right)\\
&~ {}     -\frac1{p+1}(\al \sigma -1) \int  \varphi' u^{p+1}     + \int \varphi'  u (1-\partial_x^2)^{-1}\left(u^p\right).
\end{aligned}
\]
Clearly,
\[
\left| \frac1{p+1}(\al \sigma -1) \int  \varphi' u^{p+1} \right| \lesssim  \ve^{p-1}\int  |\varphi' |u^2,
\]
which is enough. Now, recall the following results.
\begin{lemma}[\cite{ElDika2}]\label{Dika1}
The operator $ (1-\px^2)^{-1}$ satisfies the following comparison principle: for any $u,v\in H^1$,
\begin{equation}\label{eq:inverse op1}
v \le w \quad  \Longrightarrow \quad (1-\px^2)^{-1} v \le (1-\px^2)^{-1} w.
\end{equation}
\end{lemma}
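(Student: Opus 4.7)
The plan is to represent the nonlocal operator $(1-\px^2)^{-1}$ as convolution against an explicit positive kernel, from which the comparison principle reduces to the trivial observation that a positive kernel preserves positivity. Concretely, I would first recall (by taking the Fourier transform) that for any $g \in L^2(\R)$ the unique $H^2$ solution of $(1-\px^2)h = g$ has Fourier transform $\widehat h(\xi) = (1+\xi^2)^{-1} \widehat g(\xi)$, and since $(1+\xi^2)^{-1}$ is the Fourier transform of the one-dimensional Bessel kernel $K(x) := \tfrac12 e^{-|x|}$, the convolution theorem yields the explicit representation
\[
(1-\px^2)^{-1} g (x) \;=\; \frac12 \int_\R e^{-|x-y|} g(y) \, dy.
\]
The decisive feature is that the kernel $\tfrac12 e^{-|x-y|}$ is strictly positive everywhere on $\R\times\R$.

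From here the conclusion is immediate. If $v \le w$ almost everywhere, then by linearity of the operator and positivity of the kernel,
\[
(1-\px^2)^{-1} w (x) - (1-\px^2)^{-1} v(x) \;=\; \frac12 \int_\R e^{-|x-y|} (w-v)(y) \, dy \;\ge\; 0
\]
for every $x \in \R$, which is exactly the claimed inequality. Since $v, w \in H^1 \subset L^2$, the convolution is absolutely convergent and all the Fourier manipulations above are justified classically; one could even take $v, w \in L^1 + L^\infty$ with no loss.

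There is no real obstacle to this argument; it rests only on the well-known explicit form of the Bessel potential in one space dimension. The one point worth flagging is that positivity is crucially tied to the elliptic symbol $1+\xi^2$: for operators whose Green's function changes sign the analogous comparison principle fails, so the lemma really is a special feature of the regularizing operator appearing in \eqref{BBM} (and, indeed, one of the structural reasons why nonlinear estimates for gBBM work more smoothly than for KdV-type models).
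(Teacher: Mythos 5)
Your proof is correct: the representation of $(1-\px^2)^{-1}$ as convolution with the positive Bessel kernel $\tfrac12 e^{-|x|}$ immediately yields the comparison principle. The paper itself gives no proof of this lemma, simply citing \cite{ElDika2}, and your argument is precisely the standard one underlying that reference, so there is nothing to add.
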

Also,
\begin{lemma}[\cite{ElDika2,KMPP}]\label{lem:nonlinear1}
Suppose that $\phi =\phi(x)$ is such that
\begin{equation}\label{eq:inverse op2}
(1-\px^2)^{-1}\phi(x) \lesssim \phi(x), \quad x\in \R,
\end{equation}
for $\phi(x) > 0$ satisfying $|\phi^{(n)}(x)| \lesssim \phi(x)$, $n \ge 0$. Then, for $v,w \in H^1$, we have
%\begin{equation}\label{eq:nonlinear1-1}
%\int \phi^{(n)} v (1-\px^2)^{-1}(wh)_x ~\lesssim ~ \norm{v}_{H^1} \int \phi (w^2 + w_x^2 +h^2 + h_x^2),
%\end{equation}
\begin{equation}\label{eq:nonlinear1-2}
\int\phi^{(n)} v (1-\px^2)^{-1}(w^p) ~\lesssim ~\norm{v}_{H^1}\norm{w}_{H^1}^{p-2} \int \phi w^2
\end{equation}
%\begin{equation}\label{eq:nonlinear2-1}
%\int (\phi v_x)_x (1-\px^2)^{-1}(wh) \lesssim \norm{v}_{H^1} \int \phi (w^2 + w_x^2 +h^2 + h_x^2),
%\end{equation}
and
\begin{equation}\label{eq:nonlinear3-1}
\int \phi v_x (1-\px^2)^{-1} (w^p)_x \lesssim \norm{v}_{H^1}\norm{w}_{H^1}^{p-2} \int \phi (w^2 + w_x^2).
\end{equation}
\end{lemma}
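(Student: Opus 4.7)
The two ingredients driving both estimates are the explicit convolution kernel $\tfrac{1}{2}e^{-|x|}$ of $\nlop$ (which has the useful property $|\partial_x(\tfrac{1}{2}e^{-|x|})| = \tfrac{1}{2}e^{-|x|}$ off the origin, so that $\nlop$ and $\partial_x\nlop$ satisfy the same pointwise bound) and the one-dimensional Sobolev embedding $\|f\|_{L^\infty}\lesssim\|f\|_{H^1}$. The structural hypothesis $\nlop\phi\lesssim\phi$ is, by symmetry of the kernel, equivalent to $\int \phi(x)\,e^{-|x-y|}\,dx\lesssim\phi(y)$, which is precisely what makes Fubini turn a convolution against $\phi$ into a $\phi$-weighted integral on the right-hand side. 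The nonlinearity is controlled by the elementary pointwise bound $|w|^p=|w|^{p-2}w^2\lesssim\|w\|_{H^1}^{p-2}w^2$.

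For \eqref{eq:nonlinear1-2}, I would first reduce to $n=0$ via $|\phi^{(n)}|\lesssim\phi$, insert the pointwise estimate
\[
\bigl|\nlop(w^p)(x)\bigr| \;\lesssim\; \|w\|_{H^1}^{p-2}\int e^{-|x-y|}\,w^2(y)\,dy,
\]
and apply Fubini. This reduces the inequality to bounding the inner integral $\int \phi(x)|v(x)|e^{-|x-y|}\,dx\leq \|v\|_{L^\infty}\cdot \nlop\phi(y)\lesssim\|v\|_{H^1}\phi(y)$, which closes the estimate.

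For \eqref{eq:nonlinear3-1}, the naive approach fails because $v_x\notin L^\infty$ in general, and a direct Cauchy--Schwarz only produces a weaker $\sqrt{\phi}$ weight. My plan is to integrate by parts: setting $h := \nlop(w^p)_x$,
\[
\int \phi\,v_x\,h\,dx \;=\; -\int \phi'\,v\,h\,dx \;-\; \int \phi\,v\,h_x\,dx,
\]
with vanishing boundary terms since $\phi, h$ are bounded and $v$ decays at infinity. The first term is controlled exactly as in \eqref{eq:nonlinear1-2}, using $|\phi'|\lesssim\phi$ and the fact that $h=\partial_x\nlop(w^p)$ satisfies the same pointwise bound as $\nlop(w^p)$. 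For the second term, the crucial observation is the operator identity $\partial_x^2\nlop = \nlop - \mathrm{Id}$ (immediate from the Fourier symbol $-\xi^2/(1+\xi^2)$), which yields
\[
h_x \;=\; \nlop(w^p) - w^p.
\]
Thus $\int \phi\,v\,h_x$ splits into a piece covered by \eqref{eq:nonlinear1-2} with $n=0$, plus the direct pointwise term $\bigl|\int \phi\,v\,w^p\bigr|\lesssim \|v\|_{H^1}\|w\|_{H^1}^{p-2}\int \phi\,w^2$. Since $\int \phi\,w^2\leq\int \phi(w^2+w_x^2)$, the claimed bound follows.

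The main conceptual hurdle is \eqref{eq:nonlinear3-1}: one must recognise that $h_x$ should be rewritten via $\partial_x^2\nlop = \nlop - \mathrm{Id}$ and thereby reduced to \eqref{eq:nonlinear1-2}, rather than estimated directly via the derivative kernel.
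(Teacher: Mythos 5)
Your proof is correct. One thing to note at the outset: the paper does not actually prove Lemma \ref{lem:nonlinear1} --- it is quoted from \cite{ElDika2,KMPP} --- so there is no in-paper argument to compare against line by line; I judge your argument on its own and against the standard one in those references. For \eqref{eq:nonlinear1-2} your route is essentially the canonical one: bounding $|w|^p\le\|w\|_{L^\infty}^{p-2}w^2\lesssim\|w\|_{H^1}^{p-2}w^2$ under the positive kernel $\frac12 e^{-|x|}$ and then using Fubini plus the symmetry of the kernel is the same as invoking the comparison principle (Lemma \ref{Dika1}) together with the self-adjointness of $(1-\px^2)^{-1}$ against the weight and the hypothesis \eqref{eq:inverse op2}. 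Where you genuinely diverge is in \eqref{eq:nonlinear3-1}: you correctly observe that a direct estimate is problematic because $v_x$ is only in $L^2$ for general $v\in H^1$, and your integration by parts combined with the identity $\px^2(1-\px^2)^{-1}=(1-\px^2)^{-1}-\mathrm{Id}$ moves every occurrence of $v$ into $L^\infty$, proving the lemma as literally stated (and in fact with the stronger right-hand side $\int\phi\,w^2$). The references instead use that $|(w^p)_x|\le p\|w\|_{L^\infty}^{p-2}|w||w_x|\lesssim\|w\|_{H^1}^{p-2}(w^2+w_x^2)$ and run the same kernel/duality argument, which is where the $w_x^2$ on the right-hand side comes from; in the paper's actual application this is harmless because $v=f=(1-\px^2)^{-1}u$ has $\|v_x\|_{L^\infty}\lesssim\|u\|_{H^1}$, exactly the remark made in Appendix \ref{A}. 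Both routes work; yours is more self-contained and covers arbitrary $v\in H^1$, at the cost of an extra integration by parts whose boundary terms do vanish, as you note, since $v\to0$ at infinity while $\phi$ and $(1-\px^2)^{-1}(w^p)_x$ are bounded.
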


Using \eqref{eq:nonlinear1-2} with $n=0$, 
\[
\left| \int \varphi'  u (1-\partial_x^2)^{-1}\left(u^p\right) \right| \lesssim \ve \int |\varphi' | |u^p| \lesssim \ve^{p-1} \int |\varphi' | u^2.
\]

Using \eqref{eq:nonlinear1-2} with $n=0$ and \eqref{eq:nonlinear3-1},  we also have from $\|f\|_{L^{\infty}}, \|f_x\|_{L^{\infty}} \lesssim \|u\|_{H^1}$ that
\[
 \left| \int \varphi' f (1-\partial_x^{2})^{-1}(u^p) \right| \lesssim \ve  \int |\varphi'| |u^p| \lesssim \ve^{p-1} \int |\varphi' | u^2
\]
and
\[
 \left| \int \varphi' f_x (1-\partial_x^{2})^{-1}(u^p)_x \right| \lesssim \ve  \int |\varphi'| |(u^p)_x| \lesssim \ve^{p-1} \int |\varphi' | (u^2 + u_x^2).
\]
For the rest terms, using \eqref{eq:nonlinear1-2} with $n=0$ and \eqref{eq:nonlinear3-1},
\[
\int |\varphi' | ((1-\partial_x^{2})^{-1}(u^p))^2  \lesssim \| (1-\partial_x^{2})^{-1}(u^p) \|_{H^1} \ve^{p-2} \int  |\varphi' | u^2
\]
and
\[
\int |\varphi' | ((1-\partial_x^{2})^{-1}\partial_x(u^p))^2 \lesssim \| (1-\partial_x^{2})^{-1}(u^p) \|_{H^1} \ve^{p-2} \int  |\varphi' | (u^2 + u_x^2).
\] 
Finally, $ \| (1-\partial_x^{2})^{-1}(u^p) \|_{H^1}  \lesssim \|u^p\|_{H^{-1}} \lesssim \ve^{p}$. Gathering these estimates, we get for some $\delta$ small enough,
\[
\abs{\mathcal N(t)}  ~\lesssim \delta \int  |\varphi' | (u^2 + u_x^{2}).
\]

\providecommand{\bysame}{\leavevmode\hbox to3em{\hrulefill}\thinspace}
\providecommand{\MR}{\relax\ifhmode\unskip\space\fi MR }
% \MRhref is called by the amsart/book/proc definition of \MR.
\providecommand{\MRhref}[2]{%
  \href{http://www.ams.org/mathscinet-getitem?mr=#1}{#2}
}
\providecommand{\href}[2]{#2}

\end{document}